\documentclass[12pt,onecolumn]{article}

%\IEEEoverridecommandlockouts                              % This command is only
                                                          % needed if you want to
                                                          % use the \thanks command
%\overrideIEEEmargins
% See the \addtolength command later in the file to balance the column lengths
% on the last page of the document

\usepackage{tikz}

\usepackage{amsmath,amssymb,amsfonts,theorem}
\usepackage{graphics,color}
\usepackage{color}
\usepackage{subfigure}
\graphicspath{{./Figures/}}
\usepackage{graphicx}
\usepackage{fullpage}

\voffset=-3mm 
%\hoffset=-1cm \textwidth=16cm \textheight=24cm
%\addtolength{\textwidth}{2cm}

%%%%%%%%%%% Definizioni
{ \theorembodyfont{\normalfont} %\theorembodyfont{\rmfamily}
\newtheorem{example}{Example}
\newtheorem{remark}{Remark}
}
\newtheorem{assumption}{Assumption}

\newtheorem{theorem}{Theorem}
\newtheorem{lemma}{Lemma}
\newtheorem{corollary}{Corollary}

 % for a filled box

\newcommand{\sat}{\textrm{sat}}
\newcommand{\sign}{\textrm{sign}}

 % integer numbers

\newcommand{\R}{\mathbb{R}}

    %for graphs
    %for graphs
    %for graphs

%\newcommand{\xave}{x_{ave}}

 % vector of ones
 % diagonal part

    % quantizzatore uniforme statico
     % quantizzatore con isteresi
 % dot product
    %for graphs

%
%

%%%%%%%%%%%%%%%%%%%%%%%%%%%%%%

%ALCUNE MIE DEFINIZIONI

%%%%%%%%%%%%%%%%%%%%%%%%%%%%%%

\newcommand{\dst}{\displaystyle}

\def\be{\begin{equation}}
\def\ee{\end{equation}}
\def\ba{\begin{array}}
\def\ea{\end{array}}
\def\eqa{\begin{eqnarray}}
\def\eqe{\end{eqnarray}}

%%%%%%%%%%%%%%%%%%%%%%%%%%%%%%

%%%%%%%%%%%%%%%%%%%%%%%%%%%%%% End Preamble %%%%%%%%%%%%%%%%%%%%%%%%%%%%%%%%%%%%%%%%%

\begin{document}

%{\bf Consensus with disturbance rejection in networks of single integrators}

\title{Balancing time-varying demand-supply in distribution networks: an internal model approach}
\author{Claudio De Persis\thanks{C.~De Persis is with ITM, Faculty of Mathematics and
Natural Sciences (FWN), University of Groningen, 9747 AG Groningen, The
Netherlands, {\tt\small c.de.persis@rug.nl} and Department of Computer, Control and Management Engineering, Sapienza Universit\`a di Roma, Italy. This research is partially supported by an FWN starting grant. 
}%
}

\date{}
\maketitle

\begin{abstract}
The problem of load balancing in a distribution network under unknown time-varying demand and supply is studied.  A set of distributed controllers  which regulate the amount of flow through the edges is designed to guarantee convergence of the solution to the steady state solution. The results are then extended to a class of nonlinear systems and compared with existing results. Incremental passivity and internal model are the main analytical tools. 
\end{abstract}

\section{Introduction}

Cooperative control systems have been widely investigated in a variety of different contexts \cite{JNT:DB:MA:TAC86, LM:TAC04, stan.sepulchre.tac07, bai.et.al.book}. Less attention has been devoted to cooperative control in the framework of dynamical flow networks, with some  interesting exceptions \cite{MB:DZ:FA:CDC11, BETAL:CDC10, DB:FB:RP:TAC10, DB:FB:RP:JOTA09, AJVDS:JW:LHMNLC12, MB:FB:ACC11}. The aim of this paper is to study a class of cooperative control algorithms in the context of distribution networks under exogenous inputs. \\
%The problem of load balancing in a dynamical distribution network under a class of time-varying demand and supply exogenous signals is considered and a solution is provided.  
{\it Main contribution.}  We analyze and design distributed controllers at the edge which achieve load balancing in the presence of time-varying demand and supply (exogenous signals). The role of internal model and incremental passivity is investigated for the problem at hand. Similar tools have been used for controlled synchronization and leader-follower formation control in e.g.~\cite{wieland.et.al.aut11, bai.et.al.book, stan.sepulchre.tac07, CDP:BJ:CDC12} and references therein. We address a different problem and we tackle it in a novel way. The load distribution problem is then considered for a more general class of systems and this allows us to make a comparison with the results of \cite{arcak.tac07} and \cite{MB:DZ:FA:CDC11}. 

{\it Literature review.} The literature on the control of flow or distribution networks is  wide and multi-disciplinary. Here we restrict ourselves to a very small portion of it, focusing on a model which takes into account the amount of stored material at the nodes and mass balance. This class of systems has been used to model data networks \cite{MS:tac82} and  supply chains \cite{AGT:TCST11} for instance. Our paper focuses on the problem of stabilizing the flow network to a steady state solution in the presence of exogenous time-varying demand and supply under the scenario in which the controllers aim at a uniform distribution of the material among the nodes. This is a problem which has attracted considerable attention in the area of  parallel and distributed computation \cite{JNT:DB:MA:TAC86} and has been recently reconsidered for instance in \cite{MB:FB:ACC11} where input and state constraints have been taken into account and a connection with \cite{LM:TAC04} has been established. The work \cite{MB:FB:ACC11} did not consider the presence of external inputs. A large amount of work on the topic of flow control in the presence of disturbances has been carried out in works such as \cite{BETAL:CDC10, DB:FB:RP:TAC10, DB:FB:RP:JOTA09} where the problem is cast in the robust control framework. The approach in our paper is based on the theory of output regulation and to the best of our knowledge this has not been considered before. A similar problem has been tackled in \cite{AJVDS:JW:LHMNLC12} but the authors restrict themselves to the class of constant disturbances.

The organization of the paper is as follows. The class of systems under study is introduced in Section \ref{sec1}, the design of the edge regulators is carried out in Section \ref{sec2} and the extension to a class of nonlinear system in Section \ref{sec3}. The conclusions are discussed in the last section.  

\section{Distribution networks and demand supply balancing}\label{sec1}

Consider the system
\be\label{flow.net}
\ba{rcl}
\dot x &=& B\lambda + P d
\ea\ee
with $x\in \R^n$ the state, $\lambda\in \R^m$ the control vector and $d\in \R^q$, $q\le n$,  a disturbance vector. The $(n\times m)$ matrix $B$ is the incidence matrix of an {\em undirected} graph $G=(V,E)$ where $|V|=n$, $|E|=m$. The ends of the edges of $G$ are labeled with a `+' and a `-'. Then 
\[
b_{ik}=\left\{\ba{ll}
+1 & \textrm{$i$ is the positive end of $k$}\\
-1 & \textrm{$i$ is the negative end of $k$}\\
0 & \textrm{otherwise}
\ea\right.
\]

The system above is a simple model of a  flow network \cite{BETAL:CDC10} and it has been used also to model data networks \cite{MS:tac82} and  supply chains \cite{AGT:TCST11}.
The state $x_i\in \R$, $i\in {\cal I}:=1,2,\ldots,n$ represents the quantity of material stored at the node $i$, $\lambda_k\in \R$, $k=1,2,\ldots,m$ the flow through  the edge $k$. The disturbance $d_j\in \R$ represents the inflow or the outflow at some node. \\
%\todo{add motivating example from Alessandri}
The available measurements are the differences among the quantities stored  at the nodes namely,
 $z=B^T x$.\\
We assume that each disturbance $d_j$ is supposed to be generated by the exosystem
\[\ba{rcll}
%\dot w_{i1} &=&  S_i^d w_{i1}& \\
%d_i &=&  \Gamma^d_i w_{i1}& i=1,2,\ldots, n,
\dot w_{j} &=&  S_j^d w_{j}& \\
d_j &=&  \Gamma^d_j w_{j},& j=1,\ldots, q,
%j\in {\cal I}^d=\{i_1,\ldots, i_q\}\subseteq {\cal I},
\ea\]
where $w_{j}\in \R^{p_j}$ is the state of the exosystem which describes the evolution of the inflow/outflow $j$ and $\Gamma^d_j, S_j^d$ are suitable matrices. Considering more general classes of exosystems is left for future research. 
%For the sake of simplicity and without loss of generality, we consider the case in which each node is affected by a disturbance, but the different case in which only a few nodes admit exogenous inflows or outflows can be dealt with similarly.
We give the system above the compact form
\be\label{exos}\ba{rcll}
\dot w &=&  S^d w & \\
d &=&  \Gamma^d w, &% i=1,\ldots, n.
\ea\ee
where
%$w=(w_{i1}^T\ldots w_{in}^T)^T$, $d=(d_{1}^T\ldots d_{n}^T)^T$, $S^d={\rm block.diag}(S_1^d,\ldots,  S_n^d)$, $\Gamma^d={\rm block.diag}(\Gamma_1^d,\ldots, \Gamma_n^d)$.
$w=(w_{1}^T\ldots w_{q}^T)^T$, $d=(d_{1}^T\ldots d_{q}^T)^T$, $S^d={\rm block.diag}(S_1^d,\ldots,  S_q^d)$,
$\Gamma^d={\rm block.diag}(\Gamma_1^d,\ldots, \Gamma_q^d)$.
The model (\ref{flow.net}) and the  overall exosystem (\ref{exos}) return  the closed-loop system
%We also assume that a reference signal $z^\ast:\R_+\to \R^m$ is given. It corresponds to the desired difference in stocked quantities among nodes connected by an edge. In the case of even balancing of the stored quantities amongst the nodes of the network, $z^\ast=0$. The reference signal $z^\ast$ is generated by the system
%\[\ba{rcll}
%\dot \sigma_{k2} &=&  S_k^r \sigma_{k2}\\
%z^\ast_k &=&  \Gamma^r_k \sigma_{k2}, & k=1,2,\ldots, m.
%\ea\]
%We stack the variables $\sigma_{i1}$, $\sigma_{i2}$,  into  the vectors $\sigma_1$, $\sigma_2$ respectively, and  further denote the vector $(d^T\; r^T)^T$ as $w$.
%
%Having denoted by $e$ the regulation error $y-r$, the overall system looks like
%\[\ba{rcl}
%\dot w &=& S w\\
%\dot x &=& B\lambda + Pw\\
%e &=& B^T x+ Qw
%\ea\]
%\[\ba{rcl}
%\dot w &=& S^d w\\
%\dot x &=& B\lambda + P\Gamma^d w\\
%z &=& B^T x
%\ea\]
\be\label{closed.loop.system}
\ba{rcl}
\dot w &=& S^d w\\
\dot x &=& B\lambda + Pw\\
z &=& B^T x
\ea
\ee
where by a slight abuse of notation we renamed $P\Gamma^d$ simply as $P$.

%We define $\lambda$ distributed if for any $k$ the component $\lambda_k$ depends only on the information $e_i=y_i-r_i$.

We are interested in the problem of distributing the cumulative imbalance of the network due to  the in- and out-flow among the nodes. More formally the problem at hand is as follows:

\noindent {\bf Load balancing at the nodes}
%Find  %distributed
%feedback control laws for $\lambda_ k$, $k=1,2,\ldots, m$ such that, for each initial condition $(w_0, x_0)$,  the solution of the closed-loop system satisfies $\lim_{t\to +\infty} e(t)=0$.
Find  distributed
dynamic feedback control laws
%\[\ba{rcl}
%\dot \eta &=& \phi(\eta,z)\\
%\lambda &=& \psi(\eta)
%\ea
%\]
\be\label{dfc}\ba{rcl}
\dot \eta_k &=& \Phi_k \eta_k +\Lambda_k z_k\\
\lambda_k &=& \Psi_k \eta_k+\Gamma_k z_k,\;\; k=1,\ldots, m
\ea
\ee
such that, for each initial condition $(w_0, x_0,\eta_0)$,  the solution of the closed-loop system (\ref{closed.loop.system}), (\ref{dfc}) satisfies $\lim_{t\to +\infty} z(t)=0$.

In what follows we propose a solution to the problem.

\section{Design of regulators at the edges}\label{sec2}

We focus on flow networks whose underlying graph satisfies the following standing assumption:
\begin{assumption}\label{connectivity}
The graph $G$ is connected.
\end{assumption}

%Second, the problem must be feasible. To this purpose we assume the following:
%\begin{assumption}\label{re} For each $w$ solution to $\dot w = S^d w$, there exists a function $\lambda_w:\R_+\to \R^m$  such that the continuously differentiable function $x_w:\R_+\to \R^n$ solution to
%\be\label{re.eq1}\ba{rcl}
%\dot x_w &=& B\lambda_w + Pw\\
%%0 &=& B^T x_w.
%\ea\ee
%satisfies
%\be\label{re.eq2}\ba{rcl}
%%\dot x_w &=& B\lambda_w + Pw\\
%0 &=& B^T x_w.
%%0 &=& B^T x_w+ Qw.
%\ea\ee
%\end{assumption}

The first  result concerns the characterization of a ``steady state" solution to the problem:
\begin{lemma}\label{l1}
Let Assumption \ref{connectivity} hold. For each $w$ solution to $\dot w = S^d w$, if there exist  a function $\lambda_w:\R_+\to \R^m$  and a  continuously differentiable function $x^w:\R_+\to \R^n$ solution to
\be\label{re.eq1}\ba{rcl}
\dot x^w &=& B\lambda_w + Pw\\
%0 &=& B^T x^w.
\ea\ee
and
\be\label{re.eq2}\ba{rcl}
%\dot x^w &=& B\lambda_w + Pw\\
0 &=& B^T x^w
%0 &=& B^T x^w+ Qw.
\ea\ee
then 
\be\label{cum.imb}
x^w=\mathbf{1}_n x^w_\ast,\quad
\dot x^w_\ast =\frac{\mathbf{1}_n^T P w}{n}
\ee
and $\lambda_w = M w$,
for some matrix M. If the graph is a tree, then the matrix $M$ is unique.
%Under the additional assumption that the graph is a tree
%% then the function $\lambda_w$ in  (\ref{re.eq}) satisfies
%%\be\label{lambda.w}
%%\lambda_w = (B^T B)^{-1} \left(\frac{\mathbf{1}_n\mathbf{1}_n^T}{n} -I\right)P w.
%%\ee
%\be\label{lambda.w.tree}
%\lambda_w = -(B^T B)^{-1} B^T P w.
%\ee
\end{lemma}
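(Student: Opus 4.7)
My plan is to exploit two standard facts about the incidence matrix $B$ of a connected undirected graph: (i) $\ker B^T = \operatorname{span}(\mathbf{1}_n)$, and (ii) $\operatorname{range}(B) = \{v\in\R^n : \mathbf{1}_n^T v = 0\}$, equivalently $\mathbf{1}_n^T B = 0$.

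First I would use (\ref{re.eq2}) together with (i): since $B^T x^w(t) = 0$ for all $t$ and the graph is connected, $x^w(t)$ must lie in $\operatorname{span}(\mathbf{1}_n)$ for every $t$, so there exists a scalar function $x^w_\ast:\R_+\to\R$ with $x^w = \mathbf{1}_n x^w_\ast$. This is a continuously differentiable function because $x^w$ is. Next, to pin down $\dot x^w_\ast$, I would left-multiply the dynamics (\ref{re.eq1}) by $\mathbf{1}_n^T$. The term $\mathbf{1}_n^T B\lambda_w$ vanishes by (ii), leaving $\mathbf{1}_n^T \dot x^w = \mathbf{1}_n^T P w$; substituting $x^w=\mathbf{1}_n x^w_\ast$ gives $n\dot x^w_\ast = \mathbf{1}_n^T P w$, which is exactly (\ref{cum.imb}).

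Second, I would rearrange (\ref{re.eq1}) as $B\lambda_w = \dot x^w - Pw = \bigl(\tfrac{1}{n}\mathbf{1}_n\mathbf{1}_n^T - I_n\bigr)Pw$. Observe that the right-hand side lies in $\operatorname{range}(B)$, since $\mathbf{1}_n^T \bigl(\tfrac{1}{n}\mathbf{1}_n\mathbf{1}_n^T - I_n\bigr)Pw = \mathbf{1}_n^T Pw - \mathbf{1}_n^T Pw = 0$, so the equation is consistent. Choosing, for instance, the Moore--Penrose pseudoinverse $B^{+}$, the function
\[
\lambda_w(t) = B^{+}\bigl(\tfrac{1}{n}\mathbf{1}_n\mathbf{1}_n^T - I_n\bigr)P\, w(t) =: M w(t)
\]
is a valid choice, which exhibits the required matrix $M$. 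Note that $M$ depends only on $B$ and $P$, not on $w$, so the relation $\lambda_w = Mw$ holds pointwise in $t$.

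For the last claim, I would invoke the fact that the incidence matrix of an undirected graph has linearly independent columns if and only if the graph contains no cycle; when the graph is additionally connected this means it is a tree. In that case $B$ has full column rank, so $\ker B = \{0\}$ and the equation $B\lambda_w = (\tfrac{1}{n}\mathbf{1}_n\mathbf{1}_n^T - I_n)Pw$ has a unique solution for every $w$, forcing $M$ to be unique. The only step that requires any care is the argument that $\lambda_w$ inherits a purely linear-in-$w$ form rather than a more general time dependence; this is immediate because the right-hand side of $B\lambda_w = (\tfrac{1}{n}\mathbf{1}_n\mathbf{1}_n^T-I_n)Pw$ is a fixed linear map applied to $w(t)$, so any fixed left inverse (or pseudoinverse) yields a constant matrix $M$.
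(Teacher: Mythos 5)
Your proof is correct and follows essentially the same route as the paper's: connectivity gives $x^w\in\mathcal{R}(\mathbf{1}_n)$, left-multiplication of the dynamics by $\mathbf{1}_n^T$ pins down $\dot x^w_\ast$, and the residual linear equation $B\lambda_w=\bigl(\tfrac{1}{n}\mathbf{1}_n\mathbf{1}_n^T-I_n\bigr)Pw$ is solved by applying a fixed left inverse, with uniqueness in the tree case from $B$ having full column rank. The only cosmetic difference is that you use the Moore--Penrose pseudoinverse $B^{+}$ where the paper partitions $B=(B_a\;B_b)$ with $B_a$ of full column rank and sets $\lambda^w_b=\mathbf{0}$ (a particular solution it exploits later to make the last $m-n+1$ edge controllers static); your explicit consistency check that the right-hand side lies in $\mathcal{R}(B)$ is a small addition the paper leaves implicit.
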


%\begin{lemma}\label{l1}
%Under Assumptions \ref{connectivity} and \ref{re}, the function  $x^w$ solution to (\ref{re.eq1})-(\ref{re.eq2}) satisfies
%\be\label{cum.imb}
%x^w(t)=\mathbf{1}_n x^w_\ast(t),\quad
%\dot x^w(t)=\mathbf{1}_n \frac{\mathbf{1}_n^T P w}{n}, \quad t\ge 0.
%\ee
%Moreover, a function $\lambda_w$  which satisfies  (\ref{re.eq1})  takes the form
%\[
%\lambda_w = M w,
%\]
%for a suitable matrix M. If the graph is a tree
%%Under the additional assumption that the graph is a tree
%%% then the function $\lambda_w$ in  (\ref{re.eq}) satisfies
%%%\be\label{lambda.w}
%%%\lambda_w = (B^T B)^{-1} \left(\frac{\mathbf{1}_n\mathbf{1}_n^T}{n} -I\right)P w.
%%%\ee
%%\be\label{lambda.w.tree}
%%\lambda_w = -(B^T B)^{-1} B^T P w.
%%\ee
%\end{lemma}

\begin{proof}
From Assumption \ref{connectivity} and (\ref{re.eq2}),  one obtains that $x^w=\mathbf{1}_n x^{w}_\ast$, for some function $x^w_\ast:\R_+\to \R$. Replacing the expression  of $x^w$ in 
%the first equation of 
(\ref{re.eq1}) one has
\be\label{sle0}
\mathbf{1}_n \dot x^{w}_\ast = B\lambda_w + Pw,
\ee
As ${\cal N}(B^T)={\cal R}(\mathbf{1}_n)$, with ${\cal N}, {\cal R}$ the null space and the range of a matrix, multiplying on the left both sides of (\ref{sle0}) by $\mathbf{1}_n^T$ yields
%\[
$\dot x^{w}_\ast =  \frac{\mathbf{1}_n^T Pw}{n}$
%\]
as claimed. Replace the latter in (\ref{sle0}) to obtain
%\[
$Y P w =B\lambda^w$, $Y=\mathbf{1}_n\frac{\mathbf{1}_n^T}{n}-I_n$.
%\]
%
%which can be written as
%\be\label{sle}
%\underbrace{\left(\ba{rr}\mathbf{1}_n & -B\ea\right)}_{\tilde B}
%\left(\ba{c}
%\dot x_{w}^\ast \\ \lambda_w
%\ea\right)= Pw.
%\ee
By Assumption \ref{connectivity}
%\footnote{Assumption \ref{connectivity} implies ${\cal N}(B^T)={\cal R}(\mathbf{1}_n)$ which implies that $\mathbf{1}_n$ is linearly independent of the rows of $B$. Furthermore, connectivity implies that $\textrm{rk}(B)=n-1$. }
and without loss of generality (up to a relabeling of the edges of the graph), $B\lambda^w= B_a \lambda^w_a+ B_b \lambda^w_b$ with $B_a$ full-column rank and $\lambda^w_a\in \R^{n-1}$. If a solution $\lambda^w_a$ to $Y P w =B\lambda^w$ exists, then 
$\lambda^w_a= (B_a^TB_a)^{-1}B_a^T(Y P w-B_b \lambda^w_b)$.
Letting $\lambda^w_b=0$ one obtains $\lambda^w_a= M_a w$, with $M_a=(B_a^TB_a)^{-1}B_a^T Y P$. \\
If $G$ is  a tree, then $B$ is full-column rank and  $\lambda^w= (B^T B)^{-1}B^TY P w$.
%
%The matrix on the left-hand side is a full-row rank matrix $n$ as a consequence of  Assumption \ref{connectivity}.\footnote{Assumption \ref{connectivity} implies ${\cal N}(B^T)={\cal R}(\mathbf{1}_n)$ which implies that $\mathbf{1}_n$ is linearly independent of the rows of $B$. Furthermore, connectivity implies that $\textrm{rk}(B)=n-1$. }
%Hence, it admits a right-inverse $\tilde B^\dagger$, from which a solution to (\ref{sle}) is derived
%\[
%\left(\ba{c}
%\dot x_{w}^\ast \\ \lambda_w
%\ea\right)= \tilde B^\dagger Pw.
%\]
%As ${\cal N}(B^T)={\cal R}(\mathbf{1}_n)$, multiplying on the left both sides of (\ref{sle0}) by $\mathbf{1}_n^T$ yields
%\[
%\dot x_{w}^\ast =  \frac{\mathbf{1}_n^T Pw}{n}
%\]
%as claimed.\\
%In the case the graph is a tree, then $\tilde B$ is a square matrix and therefore non-singular. As a result, there exists a unique solution $\lambda_w$ to  (\ref{sle0}).
\end{proof}

In what follows, we assume that a solution to (\ref{re.eq1}), (\ref{re.eq2}) exists. Moreover, if $m>n-1$, then without loss of generality we assume that the first $n-1$ columns of $B$ are linearly independent and we let the last $m-n+1$ components of $\lambda^w$ be identically zero. 

%\begin{proof}
%From Assumption \ref{connectivity} and (\ref{re.eq2}),  one obtains that $x_w=\mathbf{1}_n x_{w}^\ast$, for some function $x_w^\ast:\R_+\to \R$. Replacing the expression  of $x_w$ in the first equation of (\ref{re.eq1}) one has
%\[
%\mathbf{1}_n \dot x_{w}^\ast = B\lambda_w + Pw.
%\]
%As ${\cal N}(B^T)={\cal R}(\mathbf{1}_n)$, multiplying on the left by $\mathbf{1}_n^T$ both sides of the latter equality yields
%\[
%\dot x_{w}^\ast =  \frac{\mathbf{1}_n^T Pw}{n}
%\]
%as claimed.
%%\\
%%Replacing the latter in (\ref{re.eq1}) one obtains $-(I_n-\frac{\mathbf{1}_n\mathbf{1}_n^T}{n}) Pw= B\lambda_w$. It can be rewritten as $-Q^TQ Pw=B\lambda_w$ which becomes $-QPw=QB\lambda_w$ after pre-multiplying both sides by $Q$. Bearing in mind that all the eigenvalues of $-QBB^TQ^T$ coincide with those of the Laplacian matrix $-BB^T$ except for the one at zero, then it is immediately seen that $\lambda_w = - B^T Q^T (QBB^TQ^T)^{-1}QP w$ identically satisfies (\ref{re.eq1}). \\
%%Replacing again $\dot x_w=\mathbf{1}_n \mathbf{1}_n^T \frac{Pw}{n}$ in  (\ref{re.eq1}) and pre-multiplying both sides by $B^T$, one has
%%$\mathbf{0}= B^T B \lambda_w +B^TP w$.
%%As $B^T B$ is nonsingular if the graph is a tree, then solving for $\lambda_w$ the previous equation gives (\ref{lambda.w.tree}).
%%
%\end{proof}

\vspace{-0.25cm}

\begin{remark}  From (\ref{cum.imb}), by integration, one has
\[
x^w(t)= \mathbf{1}_n\left(x^w_\ast(0)+ \dst\int_0^t \frac{\mathbf{1}_n^T P w(s)}{n} ds\right).
\]
Observe that $x^w$ depends on the initial condition and strictly speaking cannot be referred to as a steady state solution.   
Bearing in mind the interpretation of (\ref{flow.net}) as a flow network and of $Pw$ the vector of the inflows and outflows of the network,  the integral $\int_0^t \frac{\mathbf{1}_n^T P w(s)}{n} ds$ can be seen as the {\em cumulative} imbalance of the network. In other words, if for any given $w$ a solution to the load balancing problem exists, then the state at each node equals -- up to a constant -- the cumulative imbalance of the network. \\
In the case of a network with no imbalance, i.e.~$\mathbf{1}_n^T P w(t)=0$ for all $t\ge 0$,  $x^w$ is a {\em constant} vector.
%Moreover, by (\ref{sle0}), $x^w$ satisfies $\mathbf{1}_n^T \dot x^w = 0$ which implies that $\mathbf{1}_n^T x^w(t)= \mathbf{1}_n^T x^w(0)$ for all $t\ge 0$. As a result  $x^w_\ast(0)= \frac{\mathbf{1}_n^T x^w(0)}{n}$ and  $x^w(t)= \mathbf{1}_n \frac{\mathbf{1}_n^T x^w(0)}{n}$.
\end{remark}

\vspace{-0.25cm}

\begin{example}\label{ex1} Consider the graph depicted in Fig.~\ref{graph.example}. The graph corresponds to system (\ref{flow.net}) with
\[
B=
\left(\ba{rrr}
-1 & 0 & 1\\
1 & -1 & 0\\
0 & 1 & -1
\ea
\right),
\quad
P=
\left(\ba{rrr}
1 & 0 \\
0 & -1\\
0 & 0
\ea
\right)
\]
The solutions of (\ref{re.eq1})-(\ref{re.eq2}) (with $w=d$) are as follows
\[\ba{rcl}
\dot x^w_\ast &=& \frac{d_1-d_2}{3}\\
\lambda^w_1 &=& \lambda^w_3+\frac{2d_1+d_2}{3}\\
\lambda^w_2 &=& \lambda^w_3+\frac{d_1-d_2}{3}.
\ea\]
A solution is obtained letting  $\lambda^w_3=0$.
%
%\todo{add caption, orientation and labels to the edges  of the graph}
%
\begin{figure}
\begin{center}
\begin{tikzpicture}
  [scale=.8,auto=left,every node/.style={circle,fill=blue!20}]
%  \node (n6) at (1,10) {6};
%  \node (n4) at (4,8)  {4};
%  \node (n5) at (8,9)  {5};
  \node (n1) at (1,4) {1} 
  node[draw=none,fill=none] at (1.7,4.3) {$-$}
  node[draw=none,fill=none] at (3,4.3) {$1$} 
  node[draw=none,fill=none] at (4.3,4.3) {$+$};
  \node (n2) at (5,4)  {2}
node[draw=none,fill=none] at (5,3.4) {$-$}
node[draw=none,fill=none] at (4.3,2.5) {$2$}
 node[draw=none,fill=none] at (3.7,1.4) {$+$};
   \node (n3) at (3,1)  {3}
node[draw=none,fill=none] at (2.4,1.4) {$-$}
node[draw=none,fill=none] at (1.6,2.5) {$3$}
node[draw=none,fill=none] at (1,3.4) {$+$};

  \foreach \from/\to in {n1/n2,n2/n3,n1/n3}
    \draw (\from) -- (\to);
    \draw [->] (-1,4) -- (0.5,4) node[draw=none,fill=none] at (-1.5,4) {$d_1$};
    \draw [->] (5.5,4) -- (7,4) node[draw=none,fill=none] at (7.5,4) {$d_2$};

\end{tikzpicture}
\end{center}
\caption{The distribution network considered in the Example \ref{ex1}. \label{graph.example}}
\end{figure}
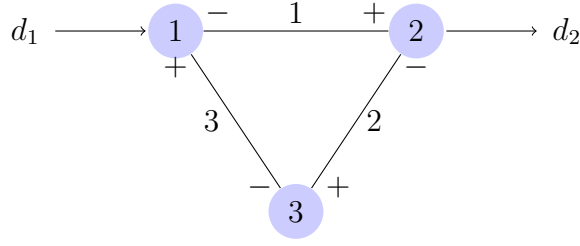
\end{example}

We introduce now a system which generates the control signal $\lambda^w$ in Lemma \ref{l1}.
Consider the input $\lambda^w_{k}$ associated with the edge $k$, with $k=1,2,\ldots, m$. In general, such input may depend on all the components of the disturbance vector $w$. Hence, to generate $\lambda^w_{k}$, the following system is proposed:
\be\label{im}
\ba{rcl}
\dot \eta_k &=& S^d \eta_k\\
 u_{k} &=& H_k \eta_k
\ea\ee
The statement below is immediate.

\vspace{-0.25cm}

\begin{lemma}\label{l3}
For any $w$ solution to $\dot w = S^d w$, there exists a solution $\eta^w_k$ to  (\ref{im}) such that $H_k \eta^w_k(t)=   \lambda_{k}^w(t)$  for all $t\ge 0$, where $\lambda^w_{k}$ is the $k$th entry of $\lambda^w$ in Lemma \ref{l1}.
\end{lemma}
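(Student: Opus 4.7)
The plan is to read the conclusion off directly from Lemma~\ref{l1}: since that lemma already asserts $\lambda^w = M w$ for some matrix $M$, the $k$th component is $\lambda^w_k(t) = M_k\, w(t)$, where $M_k$ denotes the $k$th row of $M$. The candidate for the internal model is therefore transparent---take the output matrix $H_k$ of the system (\ref{im}) to be $M_k$, and initialize $\eta_k$ by $\eta^w_k(0) = w(0)$.

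The verification proceeds in two short steps. First, since both $w$ and the proposed $\eta^w_k$ satisfy the same linear ODE $\dot{\xi} = S^d \xi$, and I have chosen the initial conditions to agree, uniqueness of solutions of linear time-invariant systems gives $\eta^w_k(t) = w(t)$ for all $t \ge 0$. Second, substituting into the output equation yields $H_k \eta^w_k(t) = M_k w(t) = \lambda^w_k(t)$, as required.

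There is essentially no technical obstacle here, which is consistent with the author's remark that ``the statement below is immediate.'' The only item worth highlighting is that the construction depends on the existence of the matrix $M$ guaranteed by Lemma~\ref{l1}; if $m > n-1$ one can use the convention already fixed after Lemma~\ref{l1} that the last $m-n+1$ components of $\lambda^w$ are zero, so the corresponding rows of $M$ (and hence of $H_k$) can be taken to be zero without any loss of generality. The content of the lemma is thus the observation that a copy of the exosystem, with an appropriate read-out, is precisely an internal model for the feedforward control signal $\lambda^w$ needed to enforce the regulation constraint $B^T x^w = 0$.
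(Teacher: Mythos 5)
Your proof is correct and is essentially identical to the paper's own argument: initialize the internal model at $w(0)$, use uniqueness of solutions of $\dot\xi = S^d\xi$ to get $\eta^w_k(t)=w(t)$, and take $H_k$ to be the $k$th row of $M$. Your closing remark about the zero components when $m>n-1$ matches the paper's Remark on the trivial reduction $u_k=0$ for $k=n,\ldots,m$.
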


\vspace{-0.25cm}

\begin{proof}
Choose $w(0)$ as the initial condition of (\ref{im}), then $\eta_k(t)=w(t)$ for all $t\ge 0$. As $\lambda^w = M w$, then it suffice to choose $H_k$ as the $k$th row of $M$ to have $H_k \eta_k^w(t)=H_k w(t)=   \lambda^{w}_{k}(t)$  for all $t\ge 0$.
\end{proof}

\vspace{-0.25cm}

\begin{remark}\label{rem.H0}
From the proof of Lemma \ref{l1} it turns out that $m-n+1$ components of $\lambda^w$ can be chosen identically zero. The matrices $H_k$ corresponding to these components are then identically zero as well. Hence, for $k=n, n+1, \ldots, m$, the system (\ref{im})  reduces trivially to $u_k=0$.
\end{remark}

The system (\ref{im}) is completed by adding control inputs $v_{k1}, v_{k2}$ to be designed for guaranteeing that the response of the closed-loop system converges to the desired response for $x$. Hence, we set
\be\label{im.complete}
\ba{rcl}
\dot \eta_k &=& S^d \eta_k + v_{k1}\\
 u_{k} &=& H_k \eta_k+ v_{k2},\quad k=1,2,\ldots, n-1
\ea\ee
with $\eta_k, v_{k1}\in \R^q$, $v_{k2} \in \R$, and $u_k=v_{k2}$ for $k=n, n+1, \ldots, m$.
\\
We write (\ref{im.complete}) in the form
\be\label{im.compact}
\ba{rcl}
\dot \eta  &=& \overline S \eta + v_{1}\\
\lambda &=& \overline H \eta+ v_{2}
\ea\ee
where $\eta=(\eta_1^T\;\eta_2^T\ldots\eta_{n-1}^T)^T$, $\overline S = I_{n-1} \otimes S^d$, where $\otimes$ denotes the Kronecker product,  and
\[
\overline H=\left(\ba{c}
\overline H_1\\ \mathbf{0}%_{m-n+1\times }
\ea\right),\quad 
\overline H_1={\rm block}.{\rm diag}(H_1,\ldots,H_{n-1}).
\]
%$\overline H={\rm block}.{\rm diag}(H_1,\ldots,H_m)$. 
Observe that by Lemma \ref{l3}, for any $w$ and provided that $v_1=\mathbf{0}$, $v_2=\mathbf{0}$,  there exists a solution $\eta^w$ to  (\ref{im.compact}) which satisfies
\be\label{im.compact.w}
\ba{rcl}
\dot \eta^w  &=& \overline S \eta^w \\
\lambda^w &=& \overline H \eta^w
\ea\ee

\begin{theorem}\label{th1}
Consider the system (\ref{flow.net}), where $B$ is the incidence matrix of a graph $G$ and $d$ is a disturbance generated by the system (\ref{exos}). 

Under Assumption \ref{connectivity}, provided that $S^d_j$ is skew symmetric for each $j=1,2,\ldots, q$, the dynamic feedback controller (\ref{im.compact}) with $v_1=- \overline{H}^T B^T x$ and $v_2=-B^T x$, namely
\be\label{ix}
\ba{rcl}
\dot \eta  &=& \overline S \eta - \overline{H}^T B^T x\\
\lambda &=& \overline H \eta-B^T x
\ea\ee
guarantees boundedness of the state of the closed-loop system and asymptotic convergence of $x(t)$ to $\mathbf{1}_n (c' +\int_0^t \frac{\mathbf{1}_n^T Pw(s)}{n} ds)$ for some constant $c'$.
\end{theorem}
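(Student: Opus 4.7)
The plan is to pass to error coordinates relative to the steady-state solution identified in Lemmas \ref{l1} and \ref{l3}, and then to exploit the passivity-like cancellations afforded by the feedback $(v_1,v_2)=(-\bar H^T B^T x,\,-B^T x)$ together with the skew-symmetry of $\bar S$.

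For a given initial condition $x(0)$, I would pick the free constant of integration to be $c'=\mathbf{1}_n^T x(0)/n$, so that the candidate steady-state $x^w(0)=\mathbf{1}_n c'$ has the same average as $x(0)$. Taking Lemma \ref{l3}'s solution $\eta^w$ with $\eta^w_k(0)=w(0)$ one has the consistency relations $\dot x^w=B\bar H\eta^w+Pw$, $B^T x^w=0$, $\dot\eta^w=\bar S\eta^w$, and $\lambda^w=\bar H\eta^w$. Setting $\tilde x=x-x^w$, $\tilde\eta=\eta-\eta^w$ and using $B^T x=B^T\tilde x$, the closed-loop equations reduce to the \emph{autonomous} error system
\begin{equation*}
\dot{\tilde x}=B\bar H\tilde\eta-BB^T\tilde x,\qquad \dot{\tilde\eta}=\bar S\tilde\eta-\bar H^T B^T\tilde x.
\end{equation*}

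For the Lyapunov analysis, I would use $V=\tfrac12\|\tilde x\|^2+\tfrac12\|\tilde\eta\|^2$. The cross terms $\tilde x^T B\bar H\tilde\eta$ and $-\tilde\eta^T\bar H^T B^T\tilde x$ cancel identically (one is the transpose of the other), and skew-symmetry of each $S^d_j$ makes $\bar S=I_{n-1}\otimes S^d$ skew-symmetric, so $\tilde\eta^T\bar S\tilde\eta\equiv 0$. What remains is $\dot V=-\|B^T\tilde x\|^2\le 0$, hence $\tilde x,\tilde\eta$ are bounded; boundedness of $\eta^w$ follows from conservation of norm under the skew-symmetric $\bar S$, yielding boundedness of the closed-loop state.

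To conclude, I would invoke LaSalle's invariance principle on the autonomous error system: trajectories approach the largest invariant subset of $\{B^T\tilde x=0\}$, on which $\tilde x=\mathbf{1}_n\alpha$ for some scalar $\alpha$. The conservation law $\tfrac{d}{dt}(\mathbf{1}_n^T\tilde x)=\mathbf{1}_n^T(B\bar H\tilde\eta-BB^T\tilde x)=0$ (since $\mathbf{1}_n^T B=0$) together with $\mathbf{1}_n^T\tilde x(0)=0$ forces $\alpha\equiv 0$ on the $\omega$-limit set, giving $\tilde x(t)\to 0$, which is precisely $x(t)\to\mathbf{1}_n\bigl(c'+\int_0^t \mathbf{1}_n^T Pw(s)/n\,ds\bigr)$. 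The main subtlety I expect concerns the LaSalle step: the $\tilde\eta$ component need not converge, as it may persistently oscillate under $\bar S$ and may even be nonzero on the invariant set (provided $\bar H\tilde\eta\in\mathcal N(B)$), so the asymptotic statement must be extracted from the $\tilde x$ component alone rather than from joint convergence of $(\tilde x,\tilde\eta)$.
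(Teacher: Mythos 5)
Your proof is correct and follows essentially the same route as the paper: error coordinates relative to $(x^w,\eta^w)$, the quadratic storage function $\tfrac12\|\tilde x\|^2+\tfrac12\|\tilde \eta\|^2$ with the skew-symmetry and cross-term cancellations giving $\dot V=-\|B^T\tilde x\|^2$, and LaSalle on $\{B^T\tilde x=\mathbf{0}\}$. The only difference is in the last step and is minor: the paper shows $\dot{\tilde x}_\ast=0$ on the invariant set and leaves the limit as $\mathbf{1}_n c$ for an unspecified constant, whereas you pin the constant down by choosing $x^w_\ast(0)=\mathbf{1}_n^T x(0)/n$ and invoking conservation of $\mathbf{1}_n^T\tilde x$; both yield the stated conclusion.
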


\begin{proof}
Consider the overall closed-loop system
\[
\ba{rcl}
\dot w &=& S^d w\\
\dot x &=& B(\overline H \eta+ v_{2}) + Pw\\
\dot \eta  &=& \overline S \eta + v_{1}\\
z &=& B^T x
\ea
\]
Introduce the new variables $\tilde x= x-x^w$, $\tilde \eta =\eta-
\eta^w$.
These satisfy
\[\ba{rcl}
\dot{\tilde x} &=& B(\overline H \eta+ v_{2}) + Pw-B\lambda^w-Pw\\
&=& B\overline H \tilde \eta+B(\overline H \eta^w-\lambda^w)+ B v_{2} \\
&=& B\overline H \tilde \eta+ B v_{2} \\
\ea\]
and
\[\ba{rcl}
\dot{\tilde \eta} &=& \overline S \eta + v_{1}-\overline S \eta^w \\
&=& \overline S \tilde \eta + v_{1}.
\ea\]
Introduce the Lyapunov function
%\[
$V(\tilde x, \tilde \eta) =\frac{1}{2}\left(\tilde x^T \tilde x+ \tilde \eta^T \tilde
\eta\right)$.
%\]
The function $V$ computed along the solutions of system
\be\label{cls.new.coord}\ba{rcl}
\dot{\tilde x} &=&  B\overline H \tilde \eta+ B v_{2} \\
\dot{\tilde \eta} &=& \overline S \tilde \eta + v_{1}
\ea\ee
satisfies
%\[\ba{rcl}
$\dot{V}(\tilde x, \tilde \eta) =\tilde x^T (B\overline H \tilde \eta+ B v_{2}) +
\tilde \eta^T  (\overline S \tilde \eta + v_{1})$.
%\ea\]
Under the assumption of the skew-simmetry of $\overline{S}$, one obtains
%\[\ba{rcl}
$\dot{V}(\tilde x, \tilde \eta) =\tilde x^T B\overline H \tilde \eta+ \tilde x^T B v_{2} + \tilde \eta^T  v_{1}$.
%\ea\]
Set
\be\label{v}
v_1 = -\overline H^T B^T\tilde x,\quad v_2 = -B^T \tilde x.
\ee
Observe that by the connectivity of the graph and the definition of $\tilde x$, $v_1 = -\overline H^T B^T x$ and  $v_2 = -B^T x$.
Then
%\[
$\dot{V}(\tilde x, \tilde \eta) =-|| B^T \tilde x||^2$.
%\]
Hence, $(\tilde x,\tilde \eta)$ is bounded. By La Salle's invariance principle and connectivity of the graph, the solutions to (\ref{cls.new.coord}) converge to the largest invariant set contained in $\{(\tilde x, \tilde \eta): B^T \tilde x=\mathbf{0}\}=\{(\tilde x, \tilde \eta): \tilde x\in {\cal R}(\mathbf{1}_n)\}$.\\
Observe that the system (\ref{cls.new.coord}) with the inputs $v$ as in (\ref{v}) becomes
\be\label{cls.new.coord.v}\ba{rcl}
\dot{\tilde x} &=&  - B B^T \tilde x +B\overline H \tilde \eta\\
\dot{\tilde \eta} &=& \overline S \tilde \eta-\overline H^T B^T\tilde x
\ea\ee
On this invariant set the system (\ref{cls.new.coord.v}) satisfies
\[
\ba{rcl}
\dot{\tilde x} &=&  B\overline H \tilde \eta\\
\dot{\tilde \eta} &=& \overline S \tilde \eta\\
\mathbf{0} &=& B^T \tilde x.
\ea
\]
Hence, $\tilde x=\mathbf{1}_n  \tilde x_\ast$. Replacing this expression in the equation for $\tilde x$ and pre-multiplying both sides by $\mathbf{1}_n^T$, one obtains $\dot {\tilde x}_\ast=0$, that is ${\tilde x}_\ast$ is a constant. Hence $\tilde x=x-x^w\to \mathbf{1}_n c$ for some constant $c$. Bearing in mind the expression of $x^w$ obtained in Lemma \ref{l1},  then one concludes that $x(t)\to \mathbf{1}_n (c' +\int_0^t \frac{\mathbf{1}_n^T Pw(s)}{n} ds)$ for some constant $c'$.
\end{proof}

\begin{remark}
In the case of balanced demand/supply, the state $x(t)$ converges to $\mathbf{1}_n c'$ for some constant $c'$.  Observe that $ \mathbf{1}^T \dot x =\mathbf{0}$, that is $\mathbf{1}^T x(t)= \mathbf{1}^T x(0)$. Hence, $\mathbf{1}^T x(0)=\lim_{t\to\infty} \mathbf{1}^T x(t)=nc'$ implies that $x(t)$ converges to $\mathbf{1}_n \frac{\mathbf{1}^T x(0)}{n}$. Hence under the effect of a  time-varying but balanced demand/supply all the components of the state $x(t)$ asymptotically converge to the average of the initial distribution of material at the nodes.
\end{remark}

Bearing in mind the block diagonal nature of the matrices $\overline S$, $\overline H$ and the definition $z=B^T x$, the dynamic feedback controller (\ref{ix})  can be decomposed  as the following set of dynamic feedback controllers at the  edges:
\be\label{dfc}
\ba{rcl}
\dot \eta_k &=& S^d \eta_k -H_{k}^T z_k\\
\lambda_{k} &=& H_k \eta_k-z_{k},\quad k=1,2,\ldots, n-1
\ea\ee
which only requires the knowledge of the difference between the quantities stored at the two nodes connected by the edge. As such the proposed controller (\ref{ix}) is fully distributed and solves the load balancing problem formulated in Section \ref{sec1}, with $\Phi_k =S^d, \Lambda_k=-H_k^T, \Psi_k= H_k, \Gamma_k=-1$. By Remark \ref{rem.H0}, for $k=n,n+1, \ldots, m$ for which $H_k=\mathbf{0}$ the edge controller becomes a static one, i.e.~$\lambda_{k} = -z_{k}$.

%\todo{add example}

\setcounter{example}{0}

\begin{example}
(Cont'd) Assume that $d_1=\alpha+\beta\sin (\omega t+\varphi)$, with $\alpha>\beta>0$ and $d_2 =\alpha$. The supply is a periodic fluctuation around a constant value while the demand is a constant. Then the matrices $S^d$ and $\Gamma^d$ in (\ref{exos}) write as
\[
S^d=
\left(\ba{ccc}
0 & 0 & 0\\
0 & 0 & \omega\\
0 & - \omega & 0\\
\ea\right),
\quad \Gamma^d =
\left(\ba{ccc}
1 & 1 & 0\\
1 & 0 & 0\\
\ea\right). 
\] 
Let $\lambda^w_3=0$. Then, for $k=1,2$, the matrices $H_k$ which allow to reproduce $\lambda^w_k$ are
\[
H_1=\left(
\ba{ccc} \frac{2}{3} & \frac{1}{3} & 0\ea
\right),\quad 
H_2=\left(
\ba{ccc} \frac{1}{3} & -\frac{1}{3} & 0\ea
\right)
\] 
Then the controllers at the edges $1$ and $2$ are given by (\ref{dfc}) with $S^d$ and $H_k$ as above and 
\[
z_1=-x_1+x_2, \quad z_2= -x_2+x_3.
\] 
The controller at edge $3$ is the static control law $\lambda_3=-z_3=-(x_1-x_3)$. 
\end{example}

\begin{remark} {\bf (Passivity-based reinterpretation)}
The proof of Theorem \ref{th1} can be  reinterpreted as follows. In view of Lemma \ref{l1}, the system
\[\ba{rcl}
\dot {\tilde x} &=& B \tilde \lambda\\
z &=& B^T \tilde x
\ea\]
is the incremental model associated with system (\ref{flow.net}). Similarly, by Lemma \ref{l3}, system
\[\ba{rcl}
\dot {\tilde \eta} &=& \overline S \tilde \eta +\overline H^T \tilde v\\
\tilde u &=& \overline H \tilde \eta,
\ea\]
where $\tilde u = u-u^w$ and $u^w:=\overline H \eta^w$,
is the incremental model associated with the internal model
\[\ba{rcl}
\dot {\eta} &=& \overline S \eta +\overline H^T  v\\
u &=& \overline H \eta
\ea\]
The systems are passive with respect to the storage functions $V_1(\tilde x)=\frac{1}{2} \tilde x^T \tilde x$ and $V_2(\tilde \eta)= \frac{1}{2} \tilde \eta^T \tilde \eta$ provided that $S^d$ is skew symmetric. The negative feedback interconnection of the two systems, namely
\[\ba{rcl}
\tilde \lambda &=& \lambda_{ext}-\tilde u\\
\tilde v &=& u_{ext}+z,
\ea\]
is passive as well from the input $(\lambda_{ext}, u_{ext})$ to the output $(z, \tilde u)$. The output feedback
\[
\left(\ba{c}
\lambda_{ext}\\ u_{ext}
\ea\right)=
-
\left(\ba{cc}
K & 0 \\ 0 & 0
\ea\right)
\left(\ba{c}
z\\ \tilde u
\ea\right)
\]
gives asymptotic convergence of the closed-loop system to the largest invariant set where $z=0$.
\end{remark}

We discuss briefly the difficulties related to the presence of possible state and input constraints.\\
{\it State constraints.} Consider a variation of the model (\ref{flow.net}) in which the positivity constraint on the amount of material stored at the nodes is enforced. The model becomes
\[
\dot x = (B\lambda +Pw)_x^+
\]
where $(b_i\lambda +p_i w)_{x_i}^+$ is the $i$th component of the vector $(B\lambda +Pw)_x^+$ and
\[
(\zeta_i)_{x_i}^+ =
\left\{\ba{ll}
\zeta_i &\hspace{-2mm} \textrm{if ($x_i >0$) or ($i=0$ and $\zeta_i\ge 0$)}\\
%b_i\lambda +p_i w & \textrm{if} & x_\;\textrm{and}\; b_i\lambda +p_i w\ge 0\\
0 & \hspace{-2mm} \textrm{if ($x_i=0$ {and} $\zeta_i< 0$)}
\ea\right.
\]
We consider the special case of balanced demand and supply, i.e.~$\mathbf{1}_n^T P w=0$. As a consequence, $Pw=-B\lambda^w$ and
\[
\dot x = \dot {\tilde x} = (B\tilde \lambda)_x^+.
\]
The function $V_1(\tilde x)=\frac{1}{2} \tilde x^T \tilde x$, with $\tilde x=  x-\mathbf{1} x_\ast^w$ and $x_\ast^w>0$, satisfies
\[
\dot{V_1}(\tilde x)=\tilde x^T (B\tilde \lambda)_x^+.
\]
Observe that $\tilde x^T (B\tilde \lambda)_x^+=\sum_{i=1}^n \tilde x_i (b_i\tilde \lambda)_{x_i}^+= \tilde x^T (B\tilde \lambda)$.
%Observe that $\tilde x^T (B\tilde \lambda)_x^+=\sum_{i=1}^n \tilde x_i (b_i\tilde \lambda)_{x_i}^+$, with
%\[
%\tilde x_i (b_i\tilde \lambda)_{x_i}^+=
%\left\{\ba{lll}
%(x_i-x_\ast^w)^T b_i\tilde \lambda & \textrm{if} & x_i>0\\
%(x_i-x_\ast^w)^T b_i\tilde \lambda & \textrm{if} & x_i=0\;\textrm{and}\; b_i\tilde \lambda\ge 0\\
%0& \textrm{if} & x_i=0\;\textrm{and}\; b_i\tilde \lambda< 0.
%\ea\right.
%\]
%As in \cite{WA:TAC04},  $\tilde x^T (B\tilde \lambda)_x^+= \tilde x^T (B\tilde \lambda)$.
This shows that the system
\[\ba{rcl}
\dot {\tilde x} &=& (B\tilde \lambda)_x^+\\
z &=& B^T \tilde x
\ea\]
is passive and the arguments of the previous remark can be used. A formal analysis requires to take into account the discontinuity of the system. This is not pursued here for lack of space.
%\todo{ Analysis of the non-smoothness of the system does not seem so straightforward. }

{\it Edge capacity constraints.} Constraints on the capacity of the edges can be modeled via a saturation function replacing $\lambda$ in (\ref{flow.net}) with  $\sat (\lambda)$. Here, $\sat (\lambda)=(\sat (\lambda_1)\ldots \sat(\lambda_m))^T$
and $\sat(\lambda_k)=\min\{|\lambda_k|, c\}\sign(\lambda_k)$. Following Lemma \ref{l1}, let $x^w, \lambda^w$ be such that
\[
\dot x^w = B\sat(\lambda^w) + Pw
\] 
and $M$  such that $\sat(\lambda^w) =Mw$. For the problem to be feasible restrict the set of initial conditions $w_0$ of the exosystem $\dot w=S^d w$ in such a way that $||Mw(t)||_\infty < c$ for all $t\ge 0$  (\cite{RDS:AI:TAC01}). Consider the  incremental model
\[\ba{rcl}
\dot{\tilde x} &=&  B[\sat(\lambda)-\sat(\lambda^w)]\\
&=&  B\sat(\lambda)-BMw
\ea
\]
To tackle the problem, we assume the %restrictive 
scenario in which at each edge a dynamic observer provides $\hat w$ that converges to $w$ asymptotically (or at each edge $k$ there exists an estimator which generates a local estimate $\hat w_k$ of $w$). Consider then 
the control input
\[\ba{rcl}
\lambda &=&  -\mu(B^T x)+M\hat w\\
&=&  -\mu(B^T \tilde x)+M\hat w,
\ea\]
where $\mu:\R^m\to \R^m$ is a  map such that each component is a monotonically increasing function which is zero at the origin.
The incremental model writes as
\[\ba{l}
\dot{\tilde x} = B\sat( -\mu(B^T \tilde x)+M\hat w)-BMw\\
= B\sat( -\mu(B^T \tilde x)+Mw+M(\hat w-w))-BMw.
\ea\]
The right-hand side is bounded and the solutions exists for all $t\ge 0$. Suppose that each component of $\mu$ is a function whose range is within $[-\frac{c}{4}, \frac{c}{4}]$. Then after a finite time, $\sat( -\mu(B^T \tilde x)+Mw+M(\hat w-w))= -\mu(B^T \tilde x)+Mw+M(\hat w-w)$ and the incremental model evolves as 
\[\ba{rcl}
\dot{\tilde x} 
&=& -B \mu(B^T \tilde x)+BM(\hat w-w). 
\ea\]
Consider the projected state $y=Q \tilde x$, where $Q$ is an $(n-1)\times n$ matrix such that $Q\mathbf{1}_n=\mathbf{0}$, $QQ^T=I_{n-1}$ and $Q^T Q= I_n-\mathbf{1}_n\mathbf{1}_n^T/n$. It yields
\[\ba{rcl}
\dot{y} 
&=& -QB \mu(B^TQ^T y)+QBM(\hat w-w). 
\ea\]
The unforced system has a globally asymptotically stable equilibrium\footnote{Take $V(y)=\frac{y^T y}{2}$; then $\dot V(y)\le 0$  and $\dot V(y)=0$ is identically zero if and only if $B^TQ^T y=0$. This implies that $y=0$. In fact if this were not true, that is $B^TQ^T y=0$ and $y\ne 0$, then $ Q B B^TQ^T y=0$ as well and this would contradict that $y\ne 0$ since $ Q B B^TQ^T$ is a non singular matrix.}; moreover the forcing term is decaying to zero.  Since the response of the system is bounded then the state $y$ converges to the origin which implies that $\tilde x$ converges to ${\cal R}(\mathbf{1}_n)$. Then one can proceed as in the last part of the proof of Theorem \ref{th1}. 
%\\
%For the case of constant disturbances an alternative argument is provided by \cite{MB:DZ:FA:CDC11} (see last part of Section \ref{sec3} for an additional comment). 
The proposed solution relies on the existence of distributed estimators for $w$, whose actual design is left as a topic for future research.

%\todo{To add later the part on the regulation of the output to different set-points}

\section{Flow networks with nonlinear dynamics at the nodes}\label{sec3}

In the  previous section, the dynamics describing the evolution of the storage variable at each node was given by
\be\label{node.dynamics.linear}
\dot x_i = b_i \lambda + p_i w,\quad i=1,2,\ldots, n
\ee
where $b_i$ and $p_i$ are the $i$th row of the incidence matrix $B$ and $P$ respectively. Consider now a different case of a flow network in which the way material accumulates at the node is described by a non-trivial dynamics, namely
\be\label{node.dynamics.nonlinear}
\dot x_i = f_i(x_i)+b_i \lambda + p_i w,\quad i=1,2,\ldots, n
\ee
with vector of measurements $y_i\in \R^m$ given by
\[
y_i = b_i^T x_i.
\]
The nonlinear system (\ref{node.dynamics.nonlinear}) allows us to put the results of the paper in a broader context and compare them with those in \cite{arcak.tac07}, \cite{MB:DZ:FA:CDC11} (see the end of the section). 
%The  dynamics $f_i(x_i)$ may also originate from inner control loops aiming at having more degree of control on the evolution of the states of the nodes independently of the interaction with the other nodes of the network.
Observe that for $k=1,2,\ldots, m$,
%\[
%y_{ik} = b_{ij} x_i =\left\{\ba{lll}
%x_i & {\rm if} & \textrm{$i$ is the positive end of the edge $k$}\\
%-x_i & {\rm if} & \textrm{$i$ is the negative end of the edge $k$}\\
%0 & {\rm if} & \textrm{$i$ is not an end of the edge $j$}.
%\ea\right.
%\]
$y_{ik}$ is either $x_i$, $-x_i$ or $0$. 
The sum of the outputs $y_i$ over all the nodes returns the vector of relative measurements $z$,
\[
z=B^T x= \dst\sum_{i=1}^n y_i.
\]
Each system
\be\label{node.dynamics}
\ba{rcl}
\dot x_i &=& f_i(x_i)+b_i \lambda + p_i w\\
y_i &=& b_i^T x_i,\quad i=1,2,\ldots, n
\ea
\ee
is assumed to be incrementally passive.
\begin{assumption}\label{a.inc.passivity}
There exists a regular\footnote{See \cite{PM:SCL08} for a definition.}  storage function $V_i:\R\times \R\times \R_+ \to \R_+$ such that
\[\ba{l}
\dst\frac{\partial V_i}{\partial t}+\dst\frac{\partial V_i}{\partial x_i} (f_i(x_i)+b_i \lambda + p_i w) +\\[2mm]
\dst\frac{\partial V_i}{\partial x'_i} (f_i(x'_i)+b_i \lambda' + p_i
w)
\le (y_i-y_i')^T (\lambda-\lambda').
\ea
\]
\end{assumption}

%\todo{take care of the time-varying nature of $V_i$}

\begin{remark} {\bf (A class of incrementally passive systems)}
Consider the linear dynamics at the node
(\ref{node.dynamics.linear}) and the function $V_i=\frac{1}{2}(x_i-x_i')^2$. Then the right-hand side of the inequality above  becomes
\[\ba{l}
(x_i-x_i')(b_i \lambda + p_i w) -(x_i-x_i')(b_i \lambda' + p_i w)\\
= (x_i-x_i')b_i (\lambda-\lambda')\\
= (b_i^T(x_i-x_i'))^T(\lambda-\lambda')\\
= (y_i-y_i')^T(\lambda-\lambda')
\ea\]
which satisfies the dissipation inequality in Assumption \ref{a.inc.passivity}.  \\
%Consider functions $f_i$ that are strongly concave. This is tantamount to request that
%\[
%(x_i-x_i')(f_i(x_i)-f_i'(x_i')) \le -\kappa_i (x_i-x_i')^2
%\]
%for some constant $\kappa_i>0$. Hence any system (\ref{node.dynamics.linear})  whose dynamics is modified with the addition of a strongly concave function $f_i(x_i)$ satisfies Assumption \ref{a.inc.passivity}.
Suppose that the dynamics $f_i$ are equal to $\nabla F_i$, with $F_i$ a twice continuously differentiable and concave function. Then  the static nonlinearity $-f_i(x_i)$ is incrementally passive, that is
\[
(x_i-x_i')(f_i(x_i)-f_i(x_i')) \le 0.
\]
%If $f_i(x_i)=\nabla F_i(x_i)$, then the relation above is satisfied provided that $F_i(x_i)$ is twice continuously differentiable and convex.
As a matter of fact $f_i(x_i)-f_i(x_i')=\nabla F_i(x_i)-\nabla F_i(x_i')=\nabla^2 F_i(\xi_i)(x_i-x_i')$ for some $\xi_i$ lying in the segment connecting $x_i, x_i'$. By concavity, $\nabla^2 F_i(\xi_i)\le 0$ and therefore $(x_i-x_i')(f_i(x_i)-f_i(x_i'))\le 0$.
Hence any system (\ref{node.dynamics})  with $f_i(x_i)=\nabla F_i(x_i)$ and $F_i$ defined as before satisfies Assumption \ref{a.inc.passivity}.
\end{remark}

Lemma \ref{l1} is replaced by the following:
\begin{lemma}\label{re2}
For each $i=1,2,\ldots, n$, for each $w$ solution to $\dot w = S^d w$, there exist a function $\lambda^w:\R_+\to \R^m$ and continuously differentiable bounded functions $x^w_i:\R_+\to \R$ that satisfy
\be\label{re.eq.nonl}\ba{rcl}
\dot x_i^w &=& f_i(x_i^w)+b_i \lambda^w + p_i w,\quad i=1,2,\ldots, n\\
0 &=& \dst\sum_{i=1}^n b_i^T x^w_i
\ea\ee
only if there exists a solution $x^w_\ast:\R_+\to \R$ defined for all $t\ge 0$ to
\be\label{exws}
\dot{x}^w_\ast = \dst\frac{\mathbf{1}_n^T f(x^w_\ast)}{n}+\dst\frac{\mathbf{1}_n^T P w}{n},
\ee
where $f(x)=(f_1(x)\ldots f_n(x))^T$. 
If this is the case, then 
%such a function $x^w_\ast:\R_+\to \R$ exists for all $t\ge 0$, then  
%the maps
\[\ba{rcl}
x^w_i&=&x^w_\ast,\;i=1,2,\ldots, n,\\
\lambda^w&=&\left(\ba{c}
{\lambda^w_a}\\
{\lambda^w_b}
\ea\right)=
\left(\ba{c}
M_1 f(x^w_\ast)+M_2 w\\
\mathbf{0}\\
\ea\right)\ea\]
with $\lambda_a^w\in \R^{n-1},  \lambda_b^w\in \R^{m-n+1}$, and $M_1, M_2$ suitable matrices.
%, satisfy  (\ref{re.eq.nonl}).
\end{lemma}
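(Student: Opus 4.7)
The plan is to mirror the proof of Lemma \ref{l1} step by step, carrying the extra drift $f(\cdot)$ through the argument. First I would use the constraint $0 = \sum_{i=1}^n b_i^T x_i^w = B^T x^w$ together with Assumption \ref{connectivity} and the identity $\mathcal{N}(B^T) = \mathcal{R}(\mathbf{1}_n)$ to conclude that $x^w = \mathbf{1}_n x^w_\ast$ for some continuously differentiable scalar function $x^w_\ast$. In particular, this already gives $x^w_i = x^w_\ast$ for all $i$, proving the first identity in the conclusion.

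Next, I would plug $x^w = \mathbf{1}_n x^w_\ast$ into the collected dynamics to obtain $\mathbf{1}_n \dot x^w_\ast = f(x^w_\ast) + B\lambda^w + Pw$, where I am using the slight notational abuse $f(x^w_\ast) = (f_1(x^w_\ast), \ldots, f_n(x^w_\ast))^T$ introduced in the statement. Pre-multiplying by $\mathbf{1}_n^T$ and using $\mathbf{1}_n^T B = 0$ yields the scalar ODE (\ref{exws}), establishing the ``only if'' direction: any admissible $x^w$ necessarily comes from a solution $x^w_\ast$ of (\ref{exws}) defined on $\R_+$.

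To extract the structure of $\lambda^w$, I would substitute (\ref{exws}) back into the collected dynamics to arrive at $B\lambda^w = Y f(x^w_\ast) + Y P w$, with $Y = \mathbf{1}_n \mathbf{1}_n^T/n - I_n$ as in Lemma \ref{l1}. The identity $\mathbf{1}_n^T Y = 0$ guarantees that the right-hand side lies in $\mathcal{R}(B) = \mathcal{R}(\mathbf{1}_n)^\perp$, so this linear equation in $\lambda^w$ is consistent. Partitioning $B = [B_a \;\; B_b]$ with $B_a$ full-column rank (possible by connectivity, as in Lemma \ref{l1}) and fixing $\lambda^w_b = \mathbf{0}$, I can then solve explicitly $\lambda^w_a = (B_a^T B_a)^{-1} B_a^T Y f(x^w_\ast) + (B_a^T B_a)^{-1} B_a^T Y P w$, recovering the claimed form with $M_1 = (B_a^T B_a)^{-1} B_a^T Y$ and $M_2 = (B_a^T B_a)^{-1} B_a^T Y P$.

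There is no substantive obstacle: the argument is a transparent generalization of Lemma \ref{l1} in which the term $f(x^w_\ast)$ is treated on the same footing as the forcing $Pw$. The only genuinely new issue is that (\ref{exws}) is now a nonlinear scalar ODE, so existence of a globally defined solution $x^w_\ast:\R_+\to\R$ is no longer automatic and must be assumed or verified separately. This is precisely why the lemma phrases the conclusion as a necessary condition with a conditional ``if this is the case'' clause, rather than as a direct existence statement.
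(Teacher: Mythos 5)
Your proof is correct and follows essentially the same route as the paper's: reduce via $B^T x^w=\mathbf{0}$ and connectivity to $x^w=\mathbf{1}_n x^w_\ast$, project the node dynamics onto $\mathbf{1}_n$ to obtain (\ref{exws}), then solve $B\lambda^w = Y\,[f(x^w_\ast)+Pw]$ by partitioning $B=(B_a\;B_b)$ and setting $\lambda^w_b=\mathbf{0}$. Two small points in your favor: your explicit consistency check ($\mathbf{1}_n^T Y=\mathbf{0}$, so the right-hand side lies in $\mathcal{R}(B)=\mathcal{R}(\mathbf{1}_n)^{\perp}$) sharpens the paper's hedge ``provided that a solution exists,'' and your sign on the $Pw$ term is the correct one (the paper's proof writes $Y[f(x^w_\ast)-Pw]$, an apparent typo).
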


\begin{proof}
The second equality in (\ref{re.eq.nonl}) and Assumption \ref{connectivity} implies that $x^w_i=x^w_\ast$ for all $i$. Replacing the latter in the first equality implies that necessarily $x^w_\ast$ must be a solution of the inhomogeneous differential equation
\[
\dot{x}^w_\ast = \dst\frac{\mathbf{1}_n^T f(x^w_\ast)}{n}+\dst\frac{\mathbf{1}_n^T P w}{n}.
\]
Suppose that a solution $x^w_\ast$ exists for all $t$ and let $x_i^w=x^w_\ast$ for each $i$. Then the second equation in (\ref{re.eq.nonl}) is satisfied by the connectivity of the graph and the properties of the incidence matrix.  Since $x_i^w={x}^w_\ast$ for all $i$, it is seen that  the first equation in (\ref{re.eq.nonl}) is satisfied if and only if there exists $\lambda^w$ such that
\[
Y[f(x^w_\ast)-Pw]=B\lambda^w, \quad \textrm{with}\;Y=\dst\frac{\mathbf{1}_n\mathbf{1}_n^T}{n}-I_n.
\]
By connectivity of the graph, the rank of $B$ is $n-1$. If the graph has $n-1$ edges (i.e.~ it is a tree), then $B$ is full-column rank and, provided that      a solution $\lambda^w$ to the previous equation exists, it is given by $\lambda^w=(B^T B)^{-1}B^T Y(f(x^w_\ast)-Pw)$.  If the graph has more than $n-1$ edges, then  without loss of generality (up to a relabeling of the edges of the graph) we can partition $B$ as $(B_a\; B_b)^T$ with $B_a$ full-column rank. Then, provided that a solution  to the previous equation exists, it is given by  $\lambda^w_a=(B_a^T B_a)^{-1}B_a^T[Y(f(x^w_\ast)-Pw)-B_b\lambda_b^w]$. One particular solution is obtained for $\lambda_b^w=0$ and $\lambda^w_a=(B_a^T B_a)^{-1}B_a^T Y(f(x^w_\ast)-Pw)$.
\end{proof}

\begin{remark}
If the inflow and outflow are balanced, i.e.~$\mathbf{1}_n^T P w=0$, then the solution $x^w_\ast$ to  (\ref{exws}) exists for all $t$ and is bounded. In fact, consider the system
\[
\dot{y} = \dst\frac{\mathbf{1}_n^T f(y)}{n}
\]
and the radially unbounded function $V(y)=\frac{1}{2} y^2$. Then
\[
\dot{V}(y)= y\dst\frac{\mathbf{1}_n^T f(y)}{n}=\dst\sum_{i=1}^n \frac{y f_i(y)}{n}.
\]
By the incremental passivity property of $-f_i$, $y f_i(y)\le 0$ for all $i$ and this implies $\dot{V}(y)\le 0$.  Hence every solution to the system above is bounded and so is $x^w_\ast$.
\end{remark}

\begin{remark}
In the case the dynamics at the nodes are all the same, i.e.~$f_i=f_j$ for all $i, j$, then the expression  of $\lambda^w$ simplifies as
\[
\lambda^w=\left(\ba{c}
{\lambda^w_a}\\
{\lambda^w_b}
\ea\right)=
\left(\ba{c}
M_2 \\
\mathbf{0}\\
\ea\right) w.
\]
This descends from the proof, since by definition of the matrix $Y$,
$Y f(x^w_\ast)=\mathbf{0}$.
\end{remark}

In the remaining of the section we assume that a solution to (\ref{re.eq.nonl}) exists. \\
The parallel interconnection of the $n$ subsystems (\ref{node.dynamics}) with input $\lambda$ and output
$z=\sum_{i=1}^n y_i$ returns an incrementally passive systems. 
Formally
%The proof is given below for the sake of completeness.

\begin{lemma}\label{l4}
The parallel interconnection
\[
\ba{rcl}
\dot x_1 &=& f_1(x_1)+b_1 \lambda + p_1 w\\
&\ldots & \\
\dot x_n &=& f_n(x_n)+b_n \lambda + p_n w\\
z &=& \dst\sum_{i=1}^n b_i^T x_i,
\ea
\]
denoted as
\be\label{flow.net.big}
\ba{rcl}
\dot x &=& f(x)+ B \lambda + P w\\
z &=& B^T x
\ea
\ee
is such that the storage function $V(x,x')=\sum_{i=1}^n V_i(x_i,x'_i)$ satisfies
\[\ba{r}
\dst\frac{\partial V}{\partial x} (f(x)+ B \lambda + Pw) +
\dst\frac{\partial V}{\partial x'} (f(x')+B \lambda' + Pw)\\
\le (z-z')^T (\lambda-\lambda').
\ea\]
\end{lemma}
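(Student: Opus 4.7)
The plan is straightforward: sum the $n$ individual dissipation inequalities provided by Assumption \ref{a.inc.passivity} and use the separable structure of $V$ together with the identity $z=\sum_{i=1}^n y_i$ noted just before the lemma. Because each $V_i$ depends only on the scalar pair $(x_i,x_i')$ (and possibly on $t$), the sum $V(x,x')=\sum_{i=1}^n V_i(x_i,x_i')$ has a gradient that decouples componentwise,
\[
\frac{\partial V}{\partial x}(f(x)+B\lambda+Pw)=\sum_{i=1}^n \frac{\partial V_i}{\partial x_i}(f_i(x_i)+b_i\lambda+p_iw),
\]
and symmetrically for the $x'$ partials. Since all subsystems are driven by the same inputs $\lambda,\lambda',w$, no further bookkeeping is needed.

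Next I would apply Assumption \ref{a.inc.passivity} to each subsystem and add the $n$ resulting inequalities. The left-hand sides reassemble into exactly the quantity displayed in the statement of Lemma \ref{l4} (with any $\partial V_i/\partial t$ terms combining into $\partial V/\partial t$ when the $V_i$ are time-dependent). The right-hand sides sum to
\[
\sum_{i=1}^n (y_i-y_i')^T(\lambda-\lambda')=\left(\sum_{i=1}^n(y_i-y_i')\right)^T(\lambda-\lambda')=(z-z')^T(\lambda-\lambda'),
\]
where I use $z=\sum_{i=1}^n b_i^T x_i=B^T x$ and likewise $z'=B^T x'$.

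There is no genuine obstacle here: the result is the incremental version of the standard fact that parallel interconnection with a common input and summed outputs preserves passivity. The only point worth checking carefully is that the additive storage function $V=\sum_i V_i$ is compatible with the summed-output map $z=B^T x$, which is exactly what the second display above verifies.
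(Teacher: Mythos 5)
Your proof is correct and is exactly the argument the paper has in mind (the paper omits the proof as ``straightforward''): summing the $n$ incremental dissipation inequalities of Assumption \ref{a.inc.passivity}, exploiting the separable structure of $V=\sum_i V_i$, and using $\sum_{i=1}^n y_i = B^T x = z$ to collapse the right-hand sides. No gaps.
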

The proof is straightforward and is omitted. 
%
%\begin{proof}
%
%The thesis descends from the following chain of relations:
%\[\ba{l}
% \dst\frac{\partial V}{\partial x} (f(x)+ B \lambda + Pw) +
%\dst\frac{\partial V}{\partial x'} (f(x')+B \lambda' + Pw)\\[2mm]
%=
%\dst\sum_{i=1}^n \frac{\partial V_i}{\partial x_i} (f_i(x_i)+b_i \lambda + p_i w_i)
%+\dst\sum_{i=1}^n \frac{\partial V_i}{\partial x'_i} (f_i(x'_i)+b_i \lambda' + p_i w_i)\\[2mm]
%=
%\dst\sum_{i=1}^n \left(\dst\frac{\partial V_i}{\partial x_i} (f_i(x_i)+b_i \lambda + p_i w_i)
%+\dst\frac{\partial V_i}{\partial x'_i} (f_i(x'_i)+b_i \lambda' + p_i w_i)\right).
%\ea
%\]
%By Assumption \ref{a.inc.passivity}, one has
%\[\ba{rl}
%& \dst\frac{\partial V}{\partial x} (f(x)+ B \lambda + Pw) +
%\dst\frac{\partial V}{\partial x'} (f(x')+B \lambda' + Pw)\\[2mm]
%\le&
%\dst\sum_{i=1}^n (y_i-y_i')^T (\lambda-\lambda')\\[2mm]
%=&
%(z-z')^T (\lambda-\lambda'),
%\ea
%\]
%as claimed.
%\end{proof}
%
Consider now systems of the form
\be\label{nonl.im}
\ba{rcl}
\dot \eta_k &=& \phi_k(\eta_k, v_k)\\
u_k &=& \psi_k(\eta_k), \quad k=1,2,\ldots, n-1,
\ea
\ee
with the following two additional properties:
\begin{assumption}\label{a.ip.im}
For each $k=1,2,\ldots, n-1$, there exists  regular functions $W_k(\eta_k, \eta_k')$ such that
\[
 \dst\frac{\partial W_k}{\partial \eta_k} \phi(\eta_k, v_k) +
 \dst\frac{\partial W_k}{\partial \eta'_k} \phi(\eta'_k, v'_k) \le
 (u_k-u_k') (v_k-v_k').
\]
\end{assumption}

\begin{assumption}\label{imp}
For each $k=1,2,\ldots, n-1$, for each $w$ solution to $\dot w=S^d w$, there exists   a bounded solution $\eta^w_k$  to $\dot \eta_k =\phi_k(\eta_k, 0)$ such that $\lambda_k^w=\psi_k(\eta_k^w)$.
\end{assumption}

%\begin{remark}
Assume that the system
\[\ba{rcl}
\dot{\eta}_{ka}^w &=& \dst\frac{\mathbf{1}_n^T f({\eta}_{ka}^w)}{n}+\dst\frac{\mathbf{1}_n^T P {\eta}_{kb}^w}{n}\\
\dot{\eta}_{kb}^w &=& S^d {\eta}_{kb}^w
\ea\]
is forward complete. Initialize the system as ${\eta}_{ka}^w(0)= x^w_\ast(0)$ and ${\eta}_{kb}^w(0)= w(0)$. Then ${\eta}_{ka}^w(t)=x^w_\ast(t)$ and ${\eta}_{kb}^w(t)= w(t)$ for all $t\ge 0$. Hence
$\lambda^w_k=M_{1k} f({\eta}_{ka})+M_{2k} {\eta}_{kb}$, $k=1,2,\ldots, n-1$, where $M_{1k}$ and $M_{2k}$ are the $k$th rows of $M_1$ and $M_2$ respectively. On the other hand, $\lambda^w_k=0$, $k=n, n+1, \ldots,
m$. 
%\left(\ba{c}
%M_{1k} f({\eta}_a)+M_{k2} {\eta}_b\\
%\mathbf{0}\\
%\ea\right)
An expression for $\phi_k, \psi_k$, $k=1,2,\ldots, n-1$ is
\[
\phi_k(\eta_k,0)= \left(\ba{c}
\dst\frac{\mathbf{1}_n^T f({\eta}_{ka})}{n}+\dst\frac{\mathbf{1}_n^T P {\eta}_{kb}}{n}\\
S^d{\eta}_{kb}
\ea\right),
\]
$\psi_k(\eta_k)=M_{1k} f({\eta}_{ka})+M_{2k} {\eta}_{kb}$.\\
In the special case of nodes with the same dynamics ($f_i=f_j=\bar f$ for all $i,j$) $\psi_k(\eta_k)$ simplifies as $M_{2k} {\eta}_{kb}$ and a system that satisfies Assumptions \ref{a.ip.im} and \ref{imp} is 
\[\ba{rcl}
\dot \eta_k &=& S^d {\eta}_{k} +M_{2k}^T v_{k}\\
u_k &=& M_{2k} {\eta}_{k},
\ea\]
with storage function $W_k(\eta_k)=\frac{1}{2} \eta_{k}^T\eta_{k}$. 
%\todo{what happens if $f_i(x_i)$ is $a_i x_i+b_i$ as in \cite{MB:DZ:FA:CDC11} ?}
%
%
%If $-f_0$  is an incrementally passive function, then the system
%\[\ba{l}
%\dot \eta_k= \phi_k(\eta_k,v_k)= \left(\ba{c}
%f({\eta}_{ka})\\
%S^d{\eta}_{kb}
%\ea\right)+\left(\ba{c}
%\mathbf{0}\\
%M_{2k}^T
%\ea\right) v_k,
%\\
%\psi_k(\eta_k)=M_{2k} {\eta}_{kb},
%\ea\]
% satisfies the incremental passivity Assumption \ref{a.ip.im} with $W_k(\eta_k)=\frac{1}{2} (\eta_{ka}^2+\eta_{kb}^T\eta_{kb})$.
%%\end{remark}
%We can formalize as follows:
%\begin{lemma}
%Assume that the system
%\[\ba{rcl}
%\dot{\eta}_{a}^w &=& \bar f({\eta}_{a}^w)+\dst\frac{\mathbf{1}_n^T P {\eta}_{kb}^w}{n}\\
%\dot{\eta}_{b}^w &=& S^d {\eta}_{b}^w
%\ea\]
%is forward complete. 
%\end{lemma}
%
%
%\begin{remark}
%In the case examined in the previous section, system (\ref{nonl.im}) is
%\[
%\ba{rcl}
%\dot \eta_k &=& S^d\eta_k+H_k^Tv_k\\
%u_k &=& H_k \eta_k, \quad k=1,2,\ldots, n-1.
%\ea
%\]
%Then Assumption \ref{a.ip.im} descends from the skew symmetry of $S$ with $W_k(\eta_k, \eta_k')=\frac{1}{2}(\eta_k-\eta_k')(\eta_k-\eta_k')$. Assumption \ref{imp} is guaranteed by Lemma \ref{l3}.
%\end{remark}
%
Collect the systems (\ref{nonl.im}) into a system with state variable $\eta=(\eta_1^T\ldots\eta_{n-1}^T)^T$, input $v=(v_1\ldots v_m)^T$ and
output $u=(u_1\ldots u_m)^T$, namely
\be\label{im.big}
\ba{rcl}
\dot \eta &=& \Phi(\eta,v)\\
u &=& \Psi(\eta)
\ea\ee
with $\Phi(\eta,v)=(\phi_1^T\ldots \phi_{n-1}^T)^T$, $\Psi(\eta)=(\psi_1\ldots \psi_{n-1}\; \mathbf{0}^T)^T$. The system is incrementally passive from $v$ to $u$ with storage function $W(\eta,\eta')=\sum_{k=1}^{n-1} W_k(\eta_k,\eta_k')$.
\\
The following holds:
\begin{theorem}
Let Assumptions \ref{connectivity}-\ref{imp} hold. Suppose that a solution to (\ref{re.eq.nonl}) exists and $x_\ast^w$ is bounded. 
Consider the systems (\ref{flow.net.big}), with input $\lambda$ and output $z$, and (\ref{im.big}), with input $v$ and output $u$, interconnected via the relations
%\[
$v=-z+v_{ext},\quad \lambda= u+\lambda_{ext}$.\\
%\]
The interconnected system is incrementally passive from the input $(\lambda_{ext}^T\; v_{ext}^T)^T$ to the output $(z^T\; u^T)^T$. Moreover, the feedback $(\lambda_{ext}^T\; v_{ext}^T)^T=(-K z^T\; \mathbf{0}^T)^T$, with $K$ a positive definite diagonal matrix,  guarantees $\lim_{t\to+\infty} z(t)=\mathbf{0}$.
\end{theorem}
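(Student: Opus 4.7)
The plan is to follow the passivity-based reinterpretation already sketched for the linear case (the remark after Theorem \ref{th1}), now using the incremental dissipation inequalities furnished by Lemma \ref{l4} for the plant and by Assumption \ref{a.ip.im} for the internal model. Introduce the composite incremental storage $U = V + W$ with $V = \sum_i V_i$ and $W = \sum_k W_k$. Along any pair of trajectories $(\lambda_{ext}, v_{ext}, x, \eta)$ and $(\lambda_{ext}', v_{ext}', x', \eta')$ of the interconnection driven by the same exogenous $w$, the two lemmas give
\[
\dot V \le (z-z')^T(\lambda-\lambda'), \qquad \dot W \le (u-u')^T(v-v').
\]
Substituting the interconnection $\lambda-\lambda' = (u-u') + (\lambda_{ext}-\lambda_{ext}')$ and $v-v' = -(z-z') + (v_{ext}-v_{ext}')$ into $\dot V+\dot W$, the cross-coupling terms cancel and I obtain
\[
\dot U \le (z-z')^T(\lambda_{ext}-\lambda_{ext}') + (u-u')^T(v_{ext}-v_{ext}'),
\]
which is precisely the asserted incremental passivity from $(\lambda_{ext},v_{ext})$ to $(z,u)$.

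For the asymptotic claim, I would compare the closed-loop solution with the steady-state reference supplied by Lemma \ref{re2} and Assumption \ref{imp}: take $x'=x^w$, $\eta'=\eta^w$ and $\lambda_{ext}'\equiv 0$, $v_{ext}'\equiv 0$. By construction $z'=B^T x^w = 0$, hence $v'=0$, and Assumption \ref{imp} yields $u'=\Psi(\eta^w)=\lambda^w=\lambda'$, so this is a genuine trajectory of the interconnection. Plugging in the feedback $\lambda_{ext}=-Kz$, $v_{ext}=0$ specialises the incremental passivity inequality to $\dot U \le -z^T K z \le 0$. Boundedness of $x^w_\ast$ (by hypothesis) and of the $\eta^w_k$ (by Assumption \ref{imp}), combined with the monotonicity of $U$, keep $\tilde x = x-x^w$ and $\tilde\eta = \eta-\eta^w$ bounded, and integrating yields $\int_0^\infty z^T K z\, dt \le U(0) < \infty$. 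A Barbalat-type argument based on the boundedness of $\dot z = B^T(f(x)+B\lambda+Pw)$ then delivers $z(t)\to 0$.

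The main obstacle I anticipate is not the algebraic passivity manipulation but the final Barbalat step: one needs the \emph{regular} storage $V_i$ (in the sense of \cite{PM:SCL08}) to be radially unbounded in $x_i-x_i'$ uniformly in time, and the nonlinearities $f_i$ to be continuous on bounded sets, so that $\tilde x$ stays bounded and $z^T K z$ is uniformly continuous. A secondary technical point is the explicit time dependence of $V_i$: when differentiating $V$ along two trajectories the $\partial_t V_i$ term must be carried through the computation, but it is exactly the quantity that Assumption \ref{a.inc.passivity} absorbs into the dissipation inequality, so no new hypothesis is needed. A final housekeeping item is to verify that the same $w$ drives both the actual and the reference trajectories, which is harmless since Lemma \ref{re2} fixes the reference relative to the exogenous signal already acting on the plant.
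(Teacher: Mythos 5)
Your proposal is correct and follows essentially the same route as the paper: form the composite storage $U=V+W$, invoke incremental passivity of the feedback interconnection (which you verify by hand where the paper cites Lemma~1 of \cite{PM:SCL08}), compare against the reference trajectory $(x^w,\eta^w)$ with $z'=\mathbf{0}$ and $u'=\lambda^w$ to get $\dot U\le -z^TKz$, and finish with boundedness plus Barbalat. The technical caveats you flag (regularity of the storage, boundedness of $\dot z$ via boundedness of $\dot w$) are exactly the ones the paper addresses at the end of its proof.
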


\begin{proof}
The feedback interconnection of incrementally passive systems is incrementally passive (\cite{PM:SCL08}, Lemma 1). Hence
\[
\ba{rcl}
\dot x &=& f(x)+ B \lambda + P w\\
z &=& B^T x\\[2mm]
\dot \eta &=& \Phi(\eta,v)\\
u &=& \Psi(\eta)\\[2mm]
\lambda &=& u+\lambda_{ext}\\
v&=& -z+v_{ext}
\ea
\]
is incrementally passive from the input $(\lambda_{ext}^T\; v_{ext}^T)^T$ to the output $(z^T\; u^T)^T$. The storage function $U$ is given by the sum $V+W$ where $V, W$ are the functions defined above (in Lemma \ref{l4} and after (\ref{im.big}), respectively). \\
Let $\lambda_{ext}=-K z$, $v_{ext}=\mathbf{0}$. The system becomes
\[
\ba{rcl}
\dot x &=& f(x)+ B (\Psi(\eta)-K B^T x) + P w\\
\dot \eta &=& \Phi(\eta,-z)\\
z &=& B^T x
\ea
\]
For a given solution $w$ to $\dot w = S^d w$, let $x^w$, $\lambda^w$ be as in  Lemma \ref{re2} and $\eta^w$ as in Assumption \ref{imp}. The functions $x^w$ and $\eta^w$ are a solution to the equations above with input $(\lambda_{ext}^T\; v_{ext}^T)^T=\mathbf{0}$ and output $(z^T\; u^T)^T=(\mathbf{0}^T {\lambda^w}^T)^T$.
 In fact
\[
\ba{rcl}
\dot x^w
&=& f(x^w)+ B \Psi(\eta^w) + P w\\
&=& f(x^w)+ B \lambda^w + P w\\
\dot \eta^w &=& \Phi(\eta^w,\mathbf{0} )\\
\mathbf{0} &=& B^T x_w.
\ea
\]
As in \cite{PM:SCL08}, by the incremental passivity of the feedback system and the existence of a solution $(x^w, \eta^w)$ of the feedback system such that $z(t)=\mathbf{0}$, any other solution $(x,\eta)$ with input $(\lambda_{ext}^T\; v_{ext}^T)^T=(-Kz^T\; \mathbf{0}^T)^T$
satisfies
\[\ba{l}
\dot V((x,\eta), (x^w,\eta^w))\le \\
((z^T\; u^T)-(\mathbf{0}^T\;{\lambda^w}^T))
\left(\ba{c}-K z(t)\\ \mathbf{0}\ea\right)=
-  z^T K z.\ea
\]
Bearing in mind the regularity of $U$ and boundedness of $x^w$, this yields boundedness of $x$. In view of the time-varying nature of the system, to infer convergence of $z$ to zero, one can resort to Barbalat's  lemma. This guarantees convergence under the assumption that $\dot z$ is bounded. This in turn requires  $\dot w$ bounded, which is the case here since $S$ is  skew symmetric.
\end{proof}

%\todo{any clue on where x(t) converges?}

\begin{corollary}\label{c1}
If  (i) $f_i=\bar f$ for all $i=1,2,\ldots, n$, (ii) there exists a twice continuously differentiable convex function $F(x)$ such that  $\nabla F(x)=\bar f(x)$ and (iii) $\mathbf{1}_n^T P w=0$ for all $t\ge 0$, then the controllers
\[\ba{rcl}
\dot{\eta}_{k} &=&
S^d{\eta}_{k}- M_{2k}^T z_k
\\
\lambda_k &=& M_{2k} {\eta}_{k}-z_k, ,\quad k=1,2,\ldots, n-1,
\ea\]
and  $\lambda_k = -z_k$, $k=n,n+1,\ldots, m$, guarantee $\lim_{t\to+\infty} z(t)=\mathbf{0}$.
\end{corollary}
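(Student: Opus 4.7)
The plan is to derive the corollary as a direct specialization of the Theorem to the case of identical node dynamics and balanced in/outflow, using the simplified internal model displayed just above the Theorem. I will verify the four assumptions of the Theorem and then read the claimed controllers off the feedback $\lambda_{ext}=-z$, $v_{ext}=\mathbf{0}$ componentwise.

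\textbf{Node-level passivity and boundedness of $x^w_\ast$.} With storage $V_i(x_i,x'_i)=\frac{1}{2}(x_i-x'_i)^2$ and $f_i=\bar f$, the dissipation inequality of Assumption \ref{a.inc.passivity} reduces to $(x_i-x'_i)(\bar f(x_i)-\bar f(x'_i))\le 0$, i.e., incremental passivity of $-\bar f$; this is delivered by hypothesis (ii) via the mean-value argument of the remark following Assumption \ref{a.inc.passivity} (aligning the sign convention on $F$, see below). Under hypothesis (iii), equation (\ref{exws}) reduces to $\dot x^w_\ast=\mathbf{1}_n^T \bar f(x^w_\ast)/n$, and $\frac{1}{2}(x^w_\ast)^2$ is non-increasing by the same passivity, so $x^w_\ast$ is globally defined and bounded --- exactly what the Theorem requires. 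Moreover, because the nodes share the same dynamics, $Y\bar f(x^w_\ast)=\mathbf{0}$, and Lemma \ref{re2} therefore collapses to $\lambda^w_a=M_2 w$, $\lambda^w_b=\mathbf{0}$, recovering the simplified form announced in the second remark after that lemma.

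\textbf{Internal-model assumptions.} For the candidate model $\dot\eta_k=S^d\eta_k+M_{2k}^T v_k$, $u_k=M_{2k}\eta_k$, take $W_k(\eta_k,\eta'_k)=\frac{1}{2}\|\eta_k-\eta'_k\|^2$. Skew-symmetry of $S^d$ (inherited from the standing requirement under which the internal model is built) kills the quadratic term, so $\dot W_k=(u_k-u'_k)(v_k-v'_k)$, which is Assumption \ref{a.ip.im}. For Assumption \ref{imp}, initialise $\eta^w_k(0)=w(0)$: then $\eta^w_k(t)=w(t)$ and $\psi_k(\eta^w_k)=M_{2k}w=\lambda^w_k$, as required.

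\textbf{Closing and anticipated difficulty.} With all four assumptions in hand, the Theorem yields $\lim_{t\to+\infty}z(t)=\mathbf{0}$ under $\lambda_{ext}=-K z$, $v_{ext}=\mathbf{0}$ with $K=I$. Writing $\lambda=u+\lambda_{ext}$ and $v=-z+v_{ext}$ componentwise, with $\psi_k=M_{2k}\eta_k$ for $k\le n-1$ and $\psi_k\equiv 0$ for $k\ge n$ (Remark \ref{rem.H0}), recovers exactly the controllers in the statement, including the static law $\lambda_k=-z_k$ for the redundant edges. The one subtle point is the sign convention in hypothesis (ii): the passivity argument needs $-\bar f$ monotone non-increasing, while the remark after Assumption \ref{a.inc.passivity} states this in terms of a \emph{concave} $F$; modulo aligning this sign, the argument is essentially bookkeeping, since the corollary is really the Theorem applied in a setting where both the plant and the internal model admit the same simple quadratic storage.
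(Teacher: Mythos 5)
Your proposal is correct and follows essentially the same route the paper intends: the corollary is a direct specialization of the Theorem, with all the needed ingredients (the quadratic storages, the collapse $Y\bar f(x^w_\ast)=\mathbf{0}$ giving $\lambda^w=M_2w$, boundedness of $x^w_\ast$ under balanced flow, and the simplified internal model with storage $\frac{1}{2}\eta_k^T\eta_k$) already laid out in the remarks and constructions preceding the Theorem. Your observation about the convex/concave sign mismatch in hypothesis (ii) is a genuine catch --- the passivity argument requires $F$ concave (so that $(x_i-x_i')(\bar f(x_i)-\bar f(x_i'))\le 0$), consistent with the remark after Assumption \ref{a.inc.passivity}, and the word ``convex'' in the corollary appears to be a typo.
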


The closed-loop system given  in the corollary above takes the form 
\[\ba{llllll}
\dot x &=& \nabla F(x)+B\lambda +Pw, &
z &=& B^T x\\
\dot \eta &=& \overline S \eta - M_2^T z,
&
\lambda &=& M_2\eta - z\;,
\ea\]
where we are assuming that $m=n-1$ for the sake of simplicity.  
This system can be compared with similar ones appeared in the recent literature 
(\cite{arcak.tac07}, \cite{MB:DZ:FA:CDC11}), where models of the form
\[\ba{llllll}
\dot x &=& \nabla F(x)+B\lambda,&
z &=& B^T x\\
\dot \eta &=& z,&
\lambda &=& -\psi(\eta)\\
\ea\]
with $\psi$ a non-decreasing monotonic non-linearity (such as a saturation function), were studied.  The presence of the non-trivial dynamics  $\overline S$ in our controller is due to the time varying-nature of the external input. 
%A consequence of the discussion above is that the models in \cite{arcak.tac07}, \cite{MB:DZ:FA:CDC11} have the intrinsic feature to reject constant disturbances. 
In \cite{arcak.tac07}, $\nabla F(x)$ has a unique equilibrium at the origin and the system $\dot x = \nabla F(x)+B\lambda$ is strictly passive. In 
\cite{MB:DZ:FA:CDC11} it is shown that if the components of the vector field $\nabla F(x)$ have different equilibria, $\nabla F(x)$ is strongly concave  and $\psi$ introduces saturation constraints, then the system's response  exhibits state clustering. 

%\todo{decide whether or not to cite the ridiculous papers by Freeman il fanfarone di Georgia Tech Egerstedt}
\section{Conclusions}
We have presented an internal model approach to the problem of balancing demand and supply in a class of distribution networks. Extensions to nonlinear systems have also been discussed. Further research will focus on a detailed investigation of state and input constraints and more complex models of demand and supply. The fulfillment of the internal model principle has to be understood for more general classes of nonlinear systems than those in Corollary \ref{c1}. This will shed light on the relation between the results in this paper and the saddle-point perspective of \cite{MB:DZ:FA:CDC11}. Compared with other papers  where the robustness to time-varying inputs is studied  using a frequency domain approach (\cite{bai.et.al.cdc10}), our state space approach allows us to consider more general classes of cooperative control systems.

\bibliographystyle{plain}
\bibliography{QUICK-BIBLIO_16Aug11}

\end{document}